\colorlet{darkblue}{blue!50!black}
\newcommand{\p}{\partial}
\newcommand{\e}{\varepsilon}
\newcommand{\R}{{\mathbb R}}
\newcommand{\IP}{{\mathbb P}}
\newcommand{\SSS}{\boldsymbol{\mathit S}}
\newcommand{\XXX}{\boldsymbol{\mathit X}}
\newcommand{\aalpha}{{\boldsymbol\alpha}}
\newcommand{\mmu}{{\boldsymbol\mu}}
\newcommand{\eeta}{{\boldsymbol\eta}}
\newcommand{\zzeta}{{\boldsymbol\zeta}}
\newcommand{\BB}{{\cal B}}
\newcommand{\DD}{{\cal D}}
\newcommand{\KK}{{\cal K}}
\newcommand{\PP}{{\cal P}}
\newcommand{\RR}{{\cal R}}
\newcommand{\VV}{{\cal V}}
\newcommand{\WW}{{\cal W}}
\newcommand{\dd}{{\textup d}}
\newcommand{\PPPP}{{\mathfrak P}}
\newcommand{\uuu}{{\boldsymbol{\mathit u}}}
\newcommand{\supp}{\mathop{\rm supp}\nolimits}
\newcommand{\vol}{\mathop{\rm vol}\nolimits}
\newcommand{\Lie}{\mathop{\rm Lie}}
\newcommand{\lspan}{\mathop{\rm span}}
\theoremstyle{plain}
\newtheorem{theorem}{Theorem}[section]
\newtheorem{proposition}[theorem]{Proposition}
\theoremstyle{definition}
\theoremstyle{remark}
\newtheorem*{example*}{Example}
\numberwithin{equation}{section}
\begin{document}
\author{Armen Shirikyan} 
\title{Controllability implies mixing I.\\
Convergence in the total variation metric}
\date{\small D\'epartement de Math\'ematiques, Universit\'e de Cergy--Pontoise, 
CNRS UMR8088\\
2 avenue Adolphe Chauvin, 95302 Cergy--Pontoise Cedex, France\\ 
E-mail: \href{mailto:Armen.Shirikyan@u-cergy.fr}{Armen.Shirikyan@u-cergy.fr}\\
National Research University {\it Moscow Power Engineering Institute, Russia\/}}
\maketitle

\hfill{\sl In memory of my Teacher, Mark Iosifovich Vishik}

\medskip
\begin{abstract}
This paper is the first part of a project devoted to studying the interconnection between controllability properties of a dynamical system and the large-time asymptotics of trajectories for the associated stochastic system. It is proved that the approximate controllability to a given point and the solid controllability from the same point imply the uniqueness of a stationary measure and exponential mixing in the total variation metric. This result is then applied to random differential equations on a compact Riemannian manifold. In the second part, we shall replace the solid controllability by a stabilisability condition and prove that it is still sufficient for the uniqueness of a stationary distribution, whereas the convergence to it holds in the weaker dual-Lipschitz metric. 

\bigskip
\noindent
{\bf AMS subject classifications:} 34K50, 58J65, 60H10, 93B05

\smallskip
\noindent
{\bf Keywords:} controllability, ergodicity, exponential mixing
\end{abstract}
\tableofcontents
\setcounter{section}{-1}

\section{Introduction}
\label{s0}
It is well known in the theory stochastic differential equations (SDE) that the mixing character of a random flow is closely related to the controllability properties of the associated deterministic dynamics. To be precise, let us consider the following SDE on a compact Riemannian manifold~$X$ without boundary:
\begin{equation} \label{0.1}
\dd u_t=V_0(u_t)\dd t+\sum_{j=1}^nV_j(u_t)\circ\dd\beta_j,\quad u_t\in X,
\end{equation}
where $V_0,V_1,\dots,V_n$ are smooth vector fields on~$X$, $\{\beta_j\}$ are independent Brownian motions, and the equation is understood in the sense of  Stratonovich. Along with~\eqref{0.1}, let us consider the controlled equation
\begin{equation} \label{0.3}
\dot u=V_0(u)+\sum_{j=1}^n\zeta^j(t)V_j(u), \quad u\in X.
\end{equation}
Here~$\zeta^j$ are real-valued piecewise continuous (control)  functions. Let us denote by $\Gamma(TX)$ the Lie algebra of smooth vector fields on~$X$ and by $\Lie(V_1,\dots,V_n)$ the minimal Lie subalgebra containing $V_j$, $j=1,\dots,n$.  We assume that the following conditions are fulfilled:
\begin{description}
\item[\bf H\"ormander condition.]
The subalgebra $\Lie(V_1,\dots,V_n)$ has full rank at some point $\hat u\in X$; that is, 
\begin{equation} \label{0.2}
\bigl\{V(\hat u) : V\in \Lie(V_1,\dots,V_n)\bigr\}=T_{\hat u}X,
\end{equation}
where~$T_u X$ stands for the tangent space of~$X$ at the point~$u$. 

\item[\bf Approximate controllability.]
For any $u_0, u_1\in X$ and any $\e>0$ there is $T>0$ and piecewise continuous functions $\zeta^j:[0,T]\to\R$ such that the solution $u(t)$ of~\eqref{0.3} issued from~$u_0$ belongs to the $\e$-neighbourhood\footnote{The manifold~$X$ is endowed with the natural distance associated with the Riemannian metric.} of~$u_1$ at time~$T$. 
\end{description}
Under the above hypotheses, the results established in~\cite{AK-1987} (see also~\cite{veretennikov-1988}) imply that the diffusion process generated by~\eqref{0.1} has a unique stationary measure. Thus, a sufficient condition for the uniqueness of a stationary distribution is expressed in terms of the control system~\eqref{0.3}: the first hypothesis is well known in the control theory and ensures the accessibility of~\eqref{0.3} (e.g., see Section~8.1 in~\cite{AS2004}), while the second is nothing else but the global approximate controllability in finite time. Let us remark that both papers mentioned above use the regularity of transition probabilities, and the latter is based on one or another form of the theory of hypoelliptic PDEs. 

The aim of our project is twofold: first, to investigate the problem of ergodicity for~\eqref{0.1} in the situation when the Brownian motions are replaced by other types of random processes (that need not to be Gaussian and therefore the tools related to hypoelliptic PDEs are not applicable), and second, to establish similar results for Markov processes corresponding to PDEs with a degenerate noise. The main emphasis is on the general principle according to which suitable controllability properties of the control system associated with the stochastic equation under study imply ergodicity of the latter. In this paper, we consider the situation where the convergence to the unique stationary measure holds in the total variation metric, and our main example is a differential equation driven by vector fields with random amplitudes. We refer the reader to Section~\ref{s1.2} for an exact formulation of our result on mixing and to Section~\ref{s2.1} for an application of it to ODEs on a compact manifold. 

\smallskip
Let us mention that the question of ergodicity for Markov processes is rather well understood, especially in the situation when the strong Feller property is satisfied; see the monographs~\cite{hasminski1980,nummelin1984,MT1993}. In the context of stochastic differential equations, the strong Feller property is often verified with the help of the Malliavin calculus or regularity of solutions for hypoelliptic PDEs; see Section~2.3 in~\cite{nualart1995}, Chapter~12 in~\cite{bogachev2010}, Section~11.5 in~\cite{daprato2014}, and Section~22.2 in~\cite{hormander2007}. In our approach, we do not use Malliavin calculus or the regularity theory for PDEs, replacing them by a general result on the image of probability measures under a smooth mapping that possesses a controllability property. Finally, let us emphasise that even though we confine ourselves to the case of a compact phase space, it is not difficult to extend the results to a more general setting of an unbounded space, assuming that the stochastic dynamics satisfies an appropriate dissipativity condition. 

\medskip
{\bf Acknowledgement.}
I am grateful to A.~Agrachev for numerous discussions on controllability properties of nonlinear systems and to S.~Kuksin  for suggesting a number of improvements. This research was carried out within the MME-DII Center of Excellence (ANR-11-LABX-0023-01) and supported by the RSF grant 14-49-00079.

\subsubsection*{Notation}
Let $X$ be a Polish space with a metric~$d$, let~$E$ be a separable Banach space, and let $J\subset\R$ be a bounded closed interval. We shall use the following notation. 

\smallskip
\noindent
$B_X(u,r)$ and $\dot B_X(u,r)$ denote, respectively, the closed and open ball in~$X$ of radius~$r$ centred at~$u$.

\smallskip
\noindent
$\BB(X)$ is the Borel $\sigma$-algebra on~$X$. 

\smallskip
\noindent
$C_b(X)$ is the space of bounded continuous functions $f:X\to\R$ with the norm 
$$
\|f\|_\infty=\sup_{u\in X}|f(u)|.
$$
In the case when $X$ is compact, we shall write $C(X)$. 

\noindent
$\PP(X)$ is the space of probability measures on~$X$. It is endowed with the total variation metric defined in Section~\ref{s1.1}. 

\smallskip
\noindent
$C(J,E)$ is the space of continuous functions $f:J\to E$ with the supremum norm. 

\smallskip
\noindent
$L^2(J,E)$ is the space of Borel-measurable functions $f:J\to E$ such that 
$$
\|f\|_{L^2(J,E)}=\biggl(\,\int_J\|f(t)\|_E^2\,\dd t\biggr)^{1/2}<\infty.
$$
In the case $E=\R$, we write $L^2(J)$.

\smallskip
\noindent
If $\varPhi:E\to F$ is a measurable mapping and $\mu\in\PP(E)$, then $\varPhi_*\mu$ denotes the image of~$\mu$ under~$\varPhi$. 

\smallskip
\noindent
For a set $\Gamma$, we denote by~$I_\Gamma$ its indicator function. If $f\in C_b(X)$ and $\mu\in\PP(X)$, then we write 
$$
(f,\mu)=\int_X f(u)\mu(\dd u). 
$$
In particular, we have $(I_\Gamma,\mu)=\mu(\Gamma)$. 

\smallskip
\noindent
$\DD(\xi)$ denotes the law of a random variable~$\xi$. 

\section{Mixing in terms of controllability properties}
\label{s1}
\subsection{General framework and definitions}
\label{s1.1}
Let~$(X,d)$ be a compact metric space, let~$E$ be a separable Banach space, and let $S:X\times E\to X$ be a continuous mapping. We consider the stochastic system
\begin{equation} \label{1.1}
u_k=S(u_{k-1},\eta_k), \quad k\ge1,
\end{equation}
supplemented with the initial condition
\begin{equation} \label{1.2}
u_0=u,
\end{equation}
where $\{\eta_k\}$ are i.i.d.\ $E$-valued random  variables and~$u$ is a random variable in~$X$ independent of~$\{\eta_k\}$. In this case, the trajectories of~\eqref{1.1} form a discrete-time Markov process~$(u_k,\IP_u)$, and we denote by $P_k(u,\Gamma)$ its transition function and by~$\{\PPPP_k\}$ and~$\{\PPPP_k^*\}$ the corresponding Markov semigroups acting in the spaces~$C(X)$ and~$\PP(X)$, respectively. Recall that a measure $\mu\in\PP(X)$ is said to be {\it stationary\/} for~$(u_k,\IP_u)$ if $\PPPP_1^*\mu=\mu$. Our aim in this section is to establish a sufficient condition for the uniqueness of a stationary measure and its (exponential) stability in the total variation metric
$$
\|\mu_1-\mu_2\|_{\mathrm{var}}
=\sup_{\Gamma\in\BB(X)}|\mu_1(\Gamma)-\mu_2(\Gamma)|
=\frac12\sup_{\|f\|_{\infty}\le1}|(f,\mu_1)-(f,\mu_2)|,
$$
where the second supremum is taken over all continuous functions whose $L^\infty$ norm is bounded by~$1$. 

\smallskip
Let us introduce some controllability properties associated with the stochastic system~\eqref{1.1}. 

\medskip
\underline{\it Approximate controllability to a given point}.
Given any initial point  $\hat u\in X$, we say that~\eqref{1.1} is {\it globally approximately controllable to~$\hat u$\/} if for any $\e>0$ there is a compact set $\KK=\KK_\e\subset E$ and an integer $m=m_\e\ge1$ such that, given an initial point $u\in X$, one can find $\zeta_1,\dots,\zeta_m\in\KK$ satisfying the inequality
\begin{equation} \label{1.3}
d\bigl(S_m(u;\zeta_1,\dots,\zeta_m),\hat u\bigr)\le\e,
\end{equation}
where $S_k(u;\eta_1,\dots,\eta_k)$ stands for the trajectory of~\eqref{1.1}, \eqref{1.2}. 

\medskip
\underline{\it Solid controllability}.
Following~\cite{AS-2005} (see Section~12), we say that~\eqref{1.1} is {\it solidly controllable from~$\hat u$\/} if there is a compact set $Q\subset E$, a non-degenerate ball $B\subset X$, and a number~$\e>0$ such that, for any continuous mapping $\varPhi:Q\to X$ satisfying the condition
\begin{equation} \label{1.4}
\sup_{\zeta\in Q}d\bigl(\varPhi(\zeta),S(\hat u,\zeta)\bigr)\le\e,
\end{equation}
we have $\varPhi(Q)\supset B$. 

\medskip
We shall also need a class of probability measures on~$E$. A measure $\ell\in\PP(E)$  is said to be {\it decomposable\/} if there are two sequences of closed subspaces~$\{F_n\}$ and~$\{G_n\}$ in~$E$ such that the following properties hold:
\begin{itemize}
\item[\bf(i)]
we have $\dim F_n<\infty$ and $F_n\subset F_{n+1}$ for any $n\ge1$, and the union $\cup_nF_n$ is dense in~$E$.
\item[\bf(ii)]
the space~$E$ can be represented as the direct sum of~$F_n$ and~$G_n$, the operator norms of the corresponding projections~${\mathsf P}_n$ and~${\mathsf Q}_n$ are bounded, and for any $n\ge1$ the measure~$\ell$ can be written as the product of its projections ${\mathsf P}_{n*}\ell$ and~${\mathsf Q}_{n*}\ell$. 
\end{itemize}
Note that the boundedness of~${\mathsf P}_n$ is equivalent to the following property:
\begin{equation} \label{1.9}
{\mathsf P}_n\to I, \quad {\mathsf Q}_n\to0\quad
\mbox{in the strong operator topology}. 
\end{equation}
Indeed, the fact that~\eqref{1.9} implies the boundedness of the norms of~${\mathsf P}_n$ and~${\mathsf Q}_n$ follows immediately from Baire's theorem. Conversely, suppose that the norms of~${\mathsf P}_n$ are bounded by a number~$C$ and fix $\zeta\in E$. In view of the density of $\cup_nF_n$, there are $\zeta_n\in F_n$ such that $\|\zeta-\zeta_n\|_E\to0$ as $n\to\infty$. It follows that 
\begin{align*}
\|\zeta-{\mathsf P}_n\zeta\|_E
&\le \|\zeta-\zeta_n\|_E+\|{\mathsf P}_n(\zeta-\zeta_n)\|_E
+\|\zeta_n-{\mathsf P}_n\zeta_n\|_E\\
&\le (C+1) \|\zeta-\zeta_n\|_E,
\end{align*}
where we used the relation ${\mathsf P}_n\zeta_n=\zeta_n$. This implies that ${\mathsf P}_n\zeta\to\zeta$ as $n\to\infty$.

\subsection{Exponential mixing in the total variation metric}
\label{s1.2}
Recall that we  consider the stochastic system~\eqref{1.1}, in which $S:X\times E\to X$ is a continuous mapping and~$\{\eta_k\}$ is a sequence of i.i.d.\ random variables in~$E$ whose law~$\ell$ is a decomposable measure on~$E$. We shall denote by~$\ell_n$ the image of~$\ell$ under the  projection to the subspace~$F_n$ (entering the definition of a decomposable measure). We shall say that a stationary measure~$\mu\in\PP(X)$ for 
the Markov process~$(u_k,\IP_u)$ associated with~\eqref{1.1} is {\it exponentially mixing\/} if there are positive numbers~$\gamma$ and~$C$ such that
\begin{equation} \label{4.3}
\|\PPPP_k^*\lambda-\mu\|_{\mathrm{var}}\le Ce^{-\gamma k}
\quad\mbox{for $k\ge0$, $\lambda\in\PP(X)$}. 
\end{equation}

\begin{theorem} \label{t1.1}
Let us assume that $X$ is a compact Riemannian manifold, the mapping $S(u,\cdot):E\to X$ is infinitely differentiable in the Fr\'echet sense, and its derivative~$(D_\eta S)(u,\eta)$ is a continuous function of~$(u,\eta)$. Suppose, in addition, that~\eqref{1.1} is globally approximately controllable to a point~$\hat u\in X$  and is solidly controllable\,\footnote{The importance of the concept of {\it solid controllability\/} was first noted by Agrachev and Sarychev in~\cite{AS-2005} (see also~\cite{AS-2008}). It was later used in~\cite{AKSS-aihp2007} to establish absolute continuity of finite-dimensional projections of laws for solutions of stochastic PDEs.} from~$\hat u$, the law~$\ell$ of the random variables~$\eta_k$ is decomposable, and the measures~${\mathsf P}_{n*}\ell$ possess  positive continuous densities~$\rho_n$ with respect to the Lebesgue measure on~$F_n$. Then the Markov process~$(u_k,\IP_u)$ associated with~\eqref{1.1} has a unique stationary measure~$\mu\in\PP(X)$, which is exponentially mixing.
\end{theorem}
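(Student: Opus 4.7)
The plan is to reduce the theorem to a Doeblin-type minorisation condition for some iterate of the kernel: the existence of an integer $M\ge1$, a constant $c>0$, and a probability measure $\nu\in\PP(X)$ such that
$$
P_M(u,\Gamma)\ge c\,\nu(\Gamma)\quad\mbox{for all }u\in X,\ \Gamma\in\BB(X).
$$
Once this is in hand, the existence and uniqueness of a stationary measure~$\mu$ and the exponential convergence~\eqref{4.3} follow from a classical coupling argument: $\PPPP_M^*$ contracts the total-variation distance between any two probability measures by a factor~$1-c$, giving~\eqref{4.3} with $\gamma=-M^{-1}\log(1-c)$ and $C=2$. The task is therefore to construct this minorisation, which I would do in two stages.

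\smallskip
\textbf{Stage 1 (reaching a neighbourhood of $\hat u$).} Fix a small $\delta>0$. Approximate controllability furnishes a compact $\KK\subset E$ and an integer $m=m_\delta$ such that every $u\in X$ admits a $\KK$-valued control $(\zeta_1,\dots,\zeta_m)$ driving the trajectory into $B_X(\hat u,\delta/2)$. Joint continuity of $S$, compactness of $X$ and~$\KK$, and a finite covering argument yield finitely many control sequences $(\zeta_1^{(i)},\dots,\zeta_m^{(i)})$ and open sets $U_i$ covering~$X$ such that, for every starting point in~$U_i$, every $\kappa$-perturbation in~$E^m$ of the corresponding control still lands the trajectory in $B_X(\hat u,\delta)$, for some $\kappa>0$. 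To ensure that the event ``$(\eta_1,\dots,\eta_m)$ is a $\kappa$-perturbation of $(\zeta_1^{(i)},\dots,\zeta_m^{(i)})$'' has positive $\ell^{\otimes m}$-mass, I first approximate each~$\zeta_j^{(i)}$ by a point in some $F_n$ using the density of $\cup_nF_n$ in~$E$ together with~\eqref{1.9}, and then invoke the strict positivity of the density~$\rho_n$. This yields
$$
\inf_{u\in X}\IP_u\bigl\{d(u_m,\hat u)\le\delta\bigr\}\ge p_\delta>0.
$$

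\smallskip
\textbf{Stage 2 (minorisation near $\hat u$).} Let $Q\subset E$, $B\subset X$ and $\e_0>0$ be the objects produced by solid controllability from~$\hat u$. Continuity of $D_\eta S$ on $X\times E$ and compactness of $X\times Q$ imply that the map $\Phi_u:Q\to X$, $\zeta\mapsto S(u,\zeta)$, is a $C^0$-small perturbation of $\Phi_{\hat u}$ uniformly in $u$ close to~$\hat u$; solid controllability then gives $\Phi_u(Q)\supset B$ for all such~$u$. The key technical claim is that there exist $n\ge1$, $\delta_0>0$ and $c_0>0$ such that
$$
(\Phi_u)_*\ell(\Gamma)\ge c_0\,\vol(\Gamma)\quad\mbox{for all }\Gamma\in\BB(B),\ u\in B_X(\hat u,\delta_0),
$$
where $\vol$ denotes Riemannian volume on~$B$. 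To prove this, one conditions on ${\mathsf Q}_n\eta$ (independent of ${\mathsf P}_n\eta$ by decomposability) and studies the induced map $F_n\ni\zeta\mapsto S(u,\zeta+{\mathsf Q}_n\eta)$; on the event that $\|{\mathsf Q}_n\eta\|_E$ is small---a positive-probability event thanks to~\eqref{1.9}---this map is a $C^1$-small perturbation of $\zeta\mapsto S(\hat u,\zeta)$, so its image still covers~$B$ by the robustness inherent in solid controllability, and a Sard-type change-of-variables argument on the finite-dimensional space~$F_n$ converts the positive density~$\rho_n$ into the required lower bound on~$B$.

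\smallskip
Combining Stages~1 and~2 via the Markov property with $\delta=\delta_0$ furnishes the sought minorisation with $M=m_{\delta_0}+1$ and $\nu=\vol(B)^{-1}\vol$, which completes the proof. The main obstacle is clearly the pushforward estimate of Stage~2: translating the purely topological information ``every $C^0$-small perturbation of $\Phi_{\hat u}$ has image covering a ball'' into a quantitative absolutely-continuous lower bound, uniform in the base point~$u$ and in the finite-dimensional truncation~$n$, is precisely the ``general result on the image of probability measures under a smooth mapping with a controllability property'' announced in the introduction, and I expect its proof to be the technical core of the paper.
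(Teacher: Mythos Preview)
Your proposal is correct and follows the same two-stage strategy as the paper: recurrence to a neighbourhood of~$\hat u$ via approximate controllability, then a local minorisation of~$P_1(u,\cdot)$ near~$\hat u$ via solid controllability, combined through the Markov property into a Doeblin condition (the paper packages this as the Recurrence--Coupling criterion of Theorem~\ref{t4.1}, but the content is identical). The only noteworthy difference is in the finite-dimensional reduction of Stage~2: rather than conditioning on~$\mathsf Q_n\eta$, the paper observes that $\zeta\mapsto S(\hat u,\mathsf P_n\zeta)$ is itself an admissible $C^0$-perturbation of~$S(\hat u,\cdot)$ on~$Q$ for large~$n$, so solid controllability applies directly to the finite-dimensional map $S(\hat u,\cdot)\bigr|_{F_n}$ on~$\mathsf P_n(Q)$; Sard's theorem then furnishes a single regular point~$\hat\zeta\in F_n$, and the pushforward estimate (Proposition~\ref{p4.3}, exactly the ``technical core'' you anticipated) yields a continuous density minorisation only on a small ball around $\hat x=S(\hat u,\hat\zeta)$, not on the full ball~$B$---which is all that is needed.
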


\begin{proof}
We shall prove that the hypotheses of Theorem~\ref{t4.1} (see Appendix) are fulfilled for the Markov process~$(u_k,\IP_u)$. Thus, we need to show that~\eqref{4.1} and~\eqref{4.2} hold for some positive numbers~$\e$, $\delta$, $p$, and~$m$. 

\smallskip
{\it Step~1: Recurrence}. 
The global approximate controllability will immediately imply~\eqref{4.1} if we prove that the support of~$\ell$ coincides with~$E$. Indeed, let us fix any $\delta>0$. In view of global approximate controllability, there exist an integer $m\ge1$ and a compact set $\KK\subset E$ such that, given $u\in X$, one can find vectors $\zeta_1^u,\dots,\zeta_m^u\in\KK$ such that 
$$
d_X(S_m(u;\zeta_1^u,\dots,\zeta_m^u),\hat u)\le\delta/2. 
$$
By the uniform continuity of~$S_m$ on the compact set $X\times \KK^m$ (where~$\KK^m$ stands for the $m$-fold product of the set~$\KK$ with itself), one can find $\e>0$ not depending on~$u$ such that
$S_m(u;\zeta_1,\dots,\zeta_m)\in \dot B_X(\hat u,\delta)$
for any vectors $\zeta_1,\dots,\zeta_m\in E$ satisfying the inequalities $\|\zeta_k-\zeta_k^u\|_E\le\e$ with $1\le k\le m$. What has been said implies that 
$$
P_m(u,\dot B_X(\hat u,\delta))\ge \IP\bigl\{\eta_k\in B_E(\zeta_k^u,\e), 1\le k\le m\bigr\}
=\prod_{k=1}^m\IP\{\eta_1\in B_E(\zeta_k^u,\e)\}. 
$$
The product on the right-hand side of this inequality is positive because the support of the law of~$\eta_1$ coincides with~$E$. It follows from the portmanteau theorem (see Theorem~11.1.1 in~\cite{dudley2002}) that the function $u\mapsto P_m(u,\dot B_X(\hat u,\delta))$ defined on the compact space~$X$ is lower-semicontinuous and, hence, minorised by a positive number~$p$. This implies the required inequality~\eqref{4.1}. 

\smallskip
We now prove that $\supp\ell=E$. Since the union $\cup_nF_n$ is dense in~$E$, the latter property will be established once we have shown that $\supp\DD(\eta_1)\supset F_m$ for any $m\ge1$. Let us fix any integer~$m\ge1$, a vector $\hat\eta\in F_m$, and a number~$\e>0$. It follows from~\eqref{1.9} that the sequence~$\{\mathsf Q_n\eta_1\}$ goes to zero almost surely and therefore also in probability. Hence, 
\begin{equation} \label{1.8}
\IP\{\|\mathsf Q_n\eta_1\|>\e/2\}\to0\quad\mbox{as $n\to\infty$}. 
\end{equation}
For any $n\ge m$, we write
\begin{align*}
\IP\{\eta_1\in B_E(\hat\eta,\e)\}&=\IP\{\|\eta_1-\hat\eta\|\le\e\}\\
&\ge \IP\{\|{\mathsf P}_n\eta_1-\hat\eta\|\le\e/2,\|{\mathsf Q}_n\eta_1\|\le\e/2\}\\
&= \IP\{\|{\mathsf P}_n\eta_1-\hat\eta\|\le\e/2\}\,\IP\{\|{\mathsf Q}_n\eta_1\|\le\e/2\}. 
\end{align*}
The first factor on the right-hand side is positive, since $\hat\eta\in\supp\DD({\mathsf P}_n\eta_1)$ due to the positivity of the density~$\rho_n$. In view of~\eqref{1.8}, the second factor goes to one as $n\to\infty$, so that the right-hand side is positive for sufficiently large~$n$. Since~$\e>0$ was arbitrary, we conclude that $\hat\eta\in\DD(\eta_1)$. 

\smallskip
{\it Step~2: Coupling}. 
We need to prove inequality~\eqref{4.2}. To this end, we first establish a lower bound for the measures~$P_1(u,\cdot)$ on a ball $B_X(\hat u,\delta)$, where $\delta>0$ is sufficiently small. This will be done with the help of Proposition~\ref{p4.3}. 

By the hypothesis, \eqref{1.1} is solidly controllable from~$\hat u$. We denote by~$Q\subset E$ a compact subset such that the image of any mapping~$\varPhi:Q\to X$ satisfying inequality~\eqref{1.4} with $\e\ll1$ contains a ball in~$X$. It follows from~\eqref{1.9} that 
$$
\sup_{\zeta\in Q}\|{\mathsf P}_n\zeta-\zeta\|_E\to0\quad\mbox{as $n\to\infty$}. 
$$ 
Combining this with the uniform continuity of $S(u,\cdot):E\to X$ on~$Q$, we see that~\eqref{1.4} is satisfied for~$\varPhi(\zeta)=S(\hat u,{\mathsf P}_n\zeta)$ with a sufficiently large $n\ge1$. Thus, there is an integer $n\ge1$ and a ball $Q_1\subset F_n$ such that the image of the mapping $S(\hat u,\cdot):Q_1\to X$ covers a ball in~$X$. By the Sard theorem (see Section~II.3 in~\cite{sternberg1983}), there is $\hat\zeta\in Q_1$ such that the derivative $(D_\eta S)(\hat u,\hat\zeta)$ has a full rank. Proposition~\ref{p4.3} implies that there is $\delta>0$ and a continuous function $\psi:B_X(\hat u,\delta)\times X\to\R_+$ such that
\begin{gather}
\psi(\hat u,\hat x)>0,\label{1.10}\\
S(u,\cdot)_*\ell\ge \psi(u,x)\vol(\dd x)\quad\mbox{for $u\in B_X(\hat u,\delta)$},
\label{1.11}
\end{gather}
where $\hat x=S(\hat u,\hat\zeta)$, and $\vol(\cdot)$ denotes the Riemannian measure on~$X$. It follows from~\eqref{1.10} that taking, if necessary, a smaller~$\delta>0$, we obtain
\begin{equation} \label{1.12}
\psi(u,x)\ge\e >0\quad
\mbox{for $u\in B_X(\hat u,\delta)$, $x\in B_X(\hat x,\delta)$}. 
\end{equation}

Now note that $S_*(u,\ell)=P_1(u,\cdot)$. Combining~\eqref{1.11} and~\eqref{1.12}, we derive 
$$
P_1(u,\dd x)\ge\e I_{B_X(\hat x,\delta)}(x)\vol(\dd x)\quad
\mbox{for $u\in B_X(\hat u,\delta)$},
$$
where $I_\Gamma$ stands for the indicator function of~$\Gamma$.
It follows that
$$
\|P_1(u,\cdot)-P_1(u',\cdot)\|_{\mathrm{var}}\le 1-\e\vol(B_X(\hat x,\delta)).
$$
This completes the proof of Theorem~\ref{t1.1}. 
\end{proof}

\section{Differential equations on a compact manifold}
\label{s2}

\subsection{Main result}
\label{s2.1}
Let~$X$ be a compact Riemannian manifold of dimension $d\ge1$ without boundary. We consider the ordinary differential equation
\begin{equation} \label{2.1}
\dot u=V_0(u)+\sum_{j=1}^n\eta^j(t)V_j(u), \quad u(t)\in X. 
\end{equation}
Here $V_j$, $j=0,\dots,n$, are smooth vector fields on~$X$ and~$\eta^j(t)$ are real-valued random processes of the form
\begin{equation} \label{2.3}
\eta^j(t)=\sum_{k=1}^\infty I_{[k-1,k)}(t)\eta_k^j(t-k+1),
\end{equation}
where $\eta_k^j$ are random variables in $L^2(J)$ with $J=[0,1]$ such that the vector functions $\eeta_k=(\eta_k^1,\dots,\eta_k^n)$ are i.i.d.\ random variables in $E:=L^2(J,\R^n)$. We denote by~$\ell\in\PP(E)$ the law of~$\eeta_k$, $k\ge1$.

Before formulating the main result of this section, we recall some well-known facts about Eq.~\eqref{2.1}. Let~$\eta^j:\R_+\to\R^n$ be measurable functions that are integrable on any compact subset of~$\R_+$. Then, for any $v\in X$, there is a unique absolutely continuous function $u:\R_+\to X$ that satisfies Eq.~\eqref{2.1} for almost every $t\ge0$ and the initial condition
\begin{equation} \label{2.2}
u(0)=v. 
\end{equation}
Moreover, if we denote by~$S$ a mapping that acts from~$X\times E$ to~$X$ and takes the pair $(v,\eeta)$ to~$u(1)$, where $u(t)$ is the solution of problem~\eqref{2.1}--\eqref{2.2} on~$J$ with $(\eta^1,\dots,\eta^n)=\eeta$, then classical results from the theory of ordinary differential equations imply that~$S$ is infinitely differentiable in the Fr\'echet sense. We denote $u_k=u(k)$ and observe  that
\begin{equation} \label{2.40}
u_k=S(u_{k-1},\eeta_k), \quad k\ge1. 
\end{equation}
Since the random variables~$\{\eeta_k\}$ are i.i.d., the family of all sequences~$\{u_k\}$ satisfying~\eqref{2.40} form a discrete-time Markov process, which is denoted by~$(u_k,\IP_u)$. We write~$\PPPP_k$ and~$\PPPP_k^*$ for the corresponding Markov semigroups. 

We say that the control system~\eqref{0.3} considered on~$X$ satisfies the {\it weak H\"ormander condition\/} at a point $\hat u\in X$ if there are~$d$ vector fields in the family 
$$
\{V_j, j=1,\dots,n; [V_j,V_k], 0\le j,k\le n; [[V_j,V_k],V_l], 0\le j,k,l\le n, \dots\}
$$
that are linearly independent at the point~$\hat u$. In other words, denoting
\begin{equation} \label{2.04}
V_\zzeta=V_0+\zeta^1V_1+\cdots+\zeta^n V_n\quad\mbox{for $\zzeta=(\zeta^1,\dots,\zeta^n)\in\R^n$},
\end{equation}
the weak H\"ormander condition is equivalent to the hypothesis that zero-time ideal\footnote{We do not use this concept in what follows, so the reader not familiar with it may safely ignore this reformulation.} of the family $\{V_\zzeta,\zzeta\in\R^n\}$ has full rank at~$\hat u$; see Section~2.4 in~\cite{jurdjevic1997}.
We refer the reader to Section~2.3 in~\cite{nualart1995} and Section~2 in~\cite{hairer-2011} for a discussion of this condition from the probabilistic point of view. 

Let us set $\XXX=C(J,X)$. The theorem below proved in the next subsection describes the large-time asymptotics of the laws of trajectories for~\eqref{2.1}--\eqref{2.2}. 

\begin{theorem} \label{t2.1}
In addition to the above hypotheses, assume that the following two conditions are satisfied:

\begin{itemize}
\item[\bf(a)]
There is a point $\hat u\in X$ 
such that system~\eqref{1.1} is globally approximately controllable to~$\hat u$, and the weak H\"ormander condition holds at~$\hat u$.
\item[\bf(b)]
The law~$\ell$ is decomposable, and the measures\,\footnote{We denote by~$F_n$ the finite-dimensional spaces entering the definition of a decomposable measure and by~${\mathsf P}_n$ the corresponding projections.}~${\mathsf P}_{n*}\ell$ possess positive continuous densities~$\rho_n$ with respect to the Lebesgue measure on~$F_n$.
\end{itemize}
Then there is a unique measure $\mmu\in\PP(\XXX)$ and positive numbers~$\gamma$ and~$C$ such that, for any $X$-valued random variable~$v$ independent of~$\{\eeta_k\}$, the solution~$u(t)$ of~\eqref{2.1}--\eqref{2.2} satisfies the inequality 
\begin{equation} \label{2.50}
\|\DD(\uuu_k)-\mmu\|_{\mathrm{var}}\le Ce^{-\gamma k}, 
\quad k\ge1,
\end{equation}
where $\uuu_k$ stands for the restriction of~$u(t)$ to the interval $[k-1,k]$, and $\|\cdot\|_{\mathrm{var}}$ denotes the total variation norm on~$\PP(\XXX)$. 
\end{theorem}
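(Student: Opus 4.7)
The plan is to deduce Theorem~\ref{t2.1} from Theorem~\ref{t1.1} applied to the discrete-time chain $(u_k,\IP_u)$ defined by~\eqref{2.40}, and then to transfer the exponential mixing from~$X$ to the trajectory space $\XXX=C(J,X)$ by a short image-measure argument. The smoothness of the time-$1$ map $S:X\times E\to X$ and the joint continuity of $(D_\eta S)(u,\eta)$ follow from standard results on smooth dependence of ODE solutions on parameters; global approximate controllability to~$\hat u$ and decomposability of~$\ell$ with positive densities are assumed directly. The only non-trivial hypothesis of Theorem~\ref{t1.1} that must be verified is solid controllability of~\eqref{1.1} from~$\hat u$.

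To derive solid controllability from the weak H\"ormander condition, I would first invoke the classical result of geometric control theory (of Agrachev--Sarychev type; cf.~Section~3 in~\cite{AS-2005}) according to which the weak H\"ormander condition at~$\hat u$ produces a control $\hat\eeta\in E$ such that the Fr\'echet derivative $(D_\eta S)(\hat u,\hat\eeta)$ is surjective onto $T_{\hat x}X$, where $\hat x:=S(\hat u,\hat\eeta)$. Choosing a $d$-dimensional subspace $F\subset E$ on which this derivative restricts to a linear isomorphism onto $T_{\hat x}X$, the inverse function theorem yields a closed ball $Q$ in the affine subspace $\hat\eeta+F$ such that $S(\hat u,\cdot)|_Q$ is a smooth embedding whose image contains a neighbourhood of~$\hat x$. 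A standard topological-degree argument (invariance of $\deg(\cdot,\mathrm{int}\,Q,y)$ under small uniform perturbations, as $y$ ranges over a small ball around~$\hat x$ that does not meet $S(\hat u,\partial Q)$) then shows that any continuous $\varPhi:Q\to X$ sufficiently close in sup-norm to $S(\hat u,\cdot)|_Q$ has image covering a fixed ball $B\subset X$ around~$\hat x$. This is precisely solid controllability.

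With every hypothesis verified, Theorem~\ref{t1.1} yields a unique stationary measure $\mu\in\PP(X)$ for $(u_k,\IP_u)$ with $\|\PPPP_k^*\lambda-\mu\|_{\mathrm{var}}\le Ce^{-\gamma k}$ for every $\lambda\in\PP(X)$. To lift this to~$\XXX$, define the continuous flow map $\varPsi:X\times E\to\XXX$ sending $(v,\eeta)$ to the solution of~\eqref{2.1}--\eqref{2.2} restricted to~$J$; then $\uuu_k=\varPsi(u_{k-1},\eeta_k)$, with $\eeta_k$ independent of $u_{k-1}$. Setting $\mmu:=\varPsi_*(\mu\otimes\ell)\in\PP(\XXX)$ and combining the contractivity of image measures under $\|\cdot\|_{\mathrm{var}}$ with the elementary identity $\|\lambda_1\otimes\ell-\lambda_2\otimes\ell\|_{\mathrm{var}}=\|\lambda_1-\lambda_2\|_{\mathrm{var}}$, one obtains
$$
\|\DD(\uuu_k)-\mmu\|_{\mathrm{var}}\le\|\DD(u_{k-1})\otimes\ell-\mu\otimes\ell\|_{\mathrm{var}}=\|\DD(u_{k-1})-\mu\|_{\mathrm{var}}\le Ce^{-\gamma(k-1)},
$$
which gives~\eqref{2.50} after adjusting the constant~$C$; uniqueness of~$\mmu$ is automatic from the convergence.

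The main obstacle is the passage from the weak H\"ormander condition to solid controllability. Accessibility (i.e.\ the reachable set from~$\hat u$ has nonempty interior) is a Chow--Rashevsky-type consequence of the H\"ormander bracket-generating property, but solid controllability requires a stronger topological stability: small continuous perturbations of the end-point map must still cover a common ball. The cleanest route appears to be to isolate a finite-dimensional smooth submersion inside $S(\hat u,\cdot)$ via the weak H\"ormander condition combined with Sard's theorem (as in the proof of Theorem~\ref{t1.1}, Step~2), and then to invoke the invariance of the Brouwer degree to absorb uniformly small perturbations.
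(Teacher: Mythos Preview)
Your overall architecture matches the paper's: reduce to Theorem~\ref{t1.1} for the time-$1$ chain, then push forward to $\XXX$ via the flow map (your $\varPsi$ is the paper's $\SSS$, and the image-measure estimate is identical).

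The one substantive divergence is in how solid controllability is extracted from the weak H\"ormander condition. You treat as a black box the existence of $\hat\eeta\in E$ with $(D_\eta S)(\hat u,\hat\eeta)$ surjective, then restrict to a $d$-dimensional affine slice, apply the inverse function theorem, and run the degree argument. The paper instead makes this step self-contained: it lifts the system to the extended space $\widetilde X=X\times\R$ by adjoining a clock coordinate, checks that $\Lie(\widetilde\VV)$ has full rank at $(\hat u,0)$, runs a Krener-type construction to embed a $(d{+}1)$-parallelepiped of switching times into $\widetilde X$ via concatenated flows $e^{\alpha_d\widetilde V_{\zzeta_d}}\circ\cdots\circ e^{\alpha_0\widetilde V_{\zzeta_0}}$, and then intersects with the hyperplane $\{\alpha_0+\cdots+\alpha_d=\tau\}$ to fix the terminal time. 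Extending by the uncontrolled drift on $[\tau,1]$ yields an explicit continuous right inverse $f:B\to E$ with $S(\hat u,f(v))=v$, after which the degree argument (equivalent to yours) gives solid controllability. The paper's route has the virtue of showing \emph{why} the weak H\"ormander condition suffices at a prescribed terminal time---the extended-system trick absorbs the time constraint that a direct appeal to accessibility would leave dangling. Your shortcut is correct provided you cite strong accessibility at fixed time (Sussmann--Jurdjevic or Krener) and then apply Sard on a finite-dimensional family of piecewise-constant controls; but this is precisely the content the paper spells out, so your ``classical result'' citation is doing real work that a referee might ask you to unpack.
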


\subsection{Proof of Theorem~\ref{t2.1}}
\label{s2.2}
We begin with a simple remark reducing the proof of theorem to the problem of exponential mixing for the discrete-time Markov process~$(u_k,\IP_u)$ associated with~\eqref{2.40}. Suppose we have proven that~\eqref{2.40} has a unique stationary measure $\mu\in\PP(X)$, which exponentially mixing in the sense that~\eqref{4.3} holds for the corresponding Markov semigroup. Let us denote by $\SSS:X\times E\to\XXX$ a mapping that takes~$(v,\eeta)$ to $(u(t),t\in J)$, where $u(t)$ is the solution of problem~\eqref{2.1}--\eqref{2.2} on~$J$ with $(\eta^1,\dots,\eta^n)=\eeta$. It follows from the independence of~$\{\eeta_k\}$ that 
\begin{equation} \label{2.51}
\DD(\uuu_k)=\SSS_*\bigl((\PPPP_{k-1}^*\lambda)\otimes\ell\bigr)
\quad\mbox{for any $k\ge1$},
\end{equation}
where $\lambda=\DD(v)$. Let us set  $\mmu=\SSS_*(\mu\otimes\ell)$. Since the total variation distance does not increase under a measurable mapping, relation~\eqref{2.51} implies  that 
\begin{equation} \label{2.52}
\|\DD(\uuu_k)-\mmu\|_{\mathrm{var}}
\le \|(\PPPP_{k-1}^*\lambda)\otimes\ell
-\mu\otimes\ell\|_{\mathrm{var}}
=\|\PPPP_{k-1}^*\lambda-\mu\|_{\mathrm{var}},
\quad k\ge1.
\end{equation}
The required inequality~\eqref{2.50} follows from~\eqref{4.3} and~\eqref{2.52}.  

\medskip
We thus need to prove that $(u_k,\IP_u)$ has a unique stationary measure, which is exponentially mixing in the total variation metric. To this end, we show that the hypotheses of Theorem~\ref{t1.1} are fulfilled. Namely, it suffices to check that the mapping $S(u,\cdot):E\to X$ is infinitely differentiable, $(D_\eta S)(u,\eta)$ is continuous on $X\times E$, and~\eqref{1.1} is solidly controllable from~$\hat u$. As was mentioned above, the first two properties are true due to classical results in the theory of ordinary differential equations. To prove the solidly controllability from~$\hat u$, we use a degree theory argument (see Section~12.2 in~\cite{AS-2005} and Section~2.3 in~\cite{shirikyan-aihp2007}) and a well-known idea from the control theory (see the proof of Theorem~3 in Section~1.2 of~\cite[Chapter~3]{jurdjevic1997}). 

\smallskip
{\it Step~1: Reduction to continuous exact controllability\/}. 
Given a closed ball $B=B_X(\hat v,r)$, we shall say that~\eqref{1.1} is {\it continuously exactly controllable from~$\hat u$ to~$B$\/} if there is  a continuous mapping $f:B\to E$ such that 
\begin{equation} \label{2.9}
S(\hat u,f(v))=v\quad\mbox{for any $v\in B$}. 
\end{equation}
We claim that if~\eqref{1.1} is continuously exactly controllable from~$\hat u$ to some ball $B=B_X(\hat v,r)$, then it is solidly controllable from~$\hat u$. Indeed, given a continuous mapping $\varphi:B\to  X$ and a point $z\in X\setminus\varphi(\p B)$, we denote by $\deg(\varphi,B,z)$ the degree of~$\varphi$ at~$z$. Let us choose~$\e>0$ so small that 
\begin{equation} \label{2.10}
\deg(\varphi,B,z)=\deg(I,B,z)=1\quad \mbox{for any $z\in B_X(\hat v,\e)$},
\end{equation}
where $I:B\to B$ is the identity mapping and $\varphi:B\to X$ is an arbitrary continuous mapping such that 
\begin{equation} \label{2.11}
\sup_{v\in B}d_X(\varphi(v),v)\le\e.
\end{equation}
Denote $Q=f(B)$, where~$f$ is the mapping entering~\eqref{2.9}, and consider any continuous mapping $\varPhi:Q\to X$ satisfying~\eqref{1.4}. Then  inequality~\eqref{2.11} is true for $\varphi=\varPhi\circ f$, whence it follows that~\eqref{2.10} holds. In particular, for any $z\in B_X(\hat v,\e)$ there is $y\in B$ such that $\varPhi(f(y))=z$. We have thus shown that $\varPhi(Q)\supset B_X(\hat v,\e)$.

\smallskip
{\it Step~2: Extended system\/}. 
To prove the continuous exact controllability of~\eqref{1.1}, let us introduce the extended phase space $\widetilde X=X\times\R$, with a natural Riemannian structure, and consider the new control system
\begin{equation} \label{2.4}
\dot y=\widetilde V_0(y)+\sum_{j=1}^n\zeta^j(t)\widetilde V_j(y), \quad y\in\widetilde X,
\end{equation}
where $y=(u,z)$, $\widetilde V_0(y)=(V_0(u),1)$, and $\widetilde V_j(y)=(V_j(u),0)$ for $j=1,\dots,n$. It is clear that if~$y(t)$ is a trajectory for~\eqref{2.4}, then the projection of~$y$ to~$X$ is a trajectory for~\eqref{0.3}, and vice versa, any trajectory of~\eqref{0.3} can be extended to a trajectory of~\eqref{2.4} by adding to it the function $z(t)=t+z_0$, where $z_0\in\R$ is an arbitrary initial point. In what follows, given a vector $\zzeta=(\zeta^1,\dots,\zeta^n)\in\R^n$, we shall write 
$$
{\widetilde V}_\zzeta
={\widetilde V}_0+\zeta^1{\widetilde V}_1+\cdots+\zeta^n {\widetilde V}_n,
\quad V_\zzeta
=V_0+\zeta^1V_1+\cdots+\zeta^n V_n
$$
and denote $\VV=\{V_\zzeta,\zzeta\in\R^n\}$ and $\widetilde\VV=\{{\widetilde V}_\zzeta,\zzeta\in\R^n\}$. Notice that the last component of~$\widetilde V_\zzeta$ is equal to~$1$ for any $\zzeta\in\R^n$. 

\smallskip
{\it Step~3: Lie algebra generated by~$\widetilde\VV$\/}. 
Let us denote by~$\Lie(\widetilde \VV)$ the Lie algebra generated by~$\widetilde\VV$.
We claim that~$\Lie(\widetilde \VV)$ has full rank at any point $(\hat u,z)$ with $z\in\R$; that is, the space of restrictions of the vector fields from $\Lie(\widetilde \VV)$ to~$(\hat u,z)$ coincide with the tangent space~$T_{(\hat u,z)}\widetilde X$. Indeed, it is straightforward to check that
$$
[{\widetilde V}_i,{\widetilde V}_j]=([V_i,V_j],0)\quad\mbox{for $0\le i,j\le n$},
$$
whence it follows that the derived algebra\footnote{Recall that the {\it derived algebra\/} of a family of vector fields~$\WW$ is defined as the vector span of all possible (iterated) commutators of the elements of~$\WW$.} of~$\widetilde\VV$ has the form
\begin{equation} \label{2.6}
\widetilde\DD=\{(W,0),W\in\DD\},
\end{equation}
where~$\DD$ stands for the derived algebra of~$\VV$. The weak H\"ormander condition implies that 
$$
\lspan\{V_1,\dots,V_n,\DD\}\bigr|_{\hat u}=T_{\hat u}X. 
$$
Combining this with~\eqref{2.6}, we see that
$$
\lspan\{{\widetilde V}_1,\dots,{\widetilde V}_n,{\widetilde \DD}\}\bigr|_{(\hat u,z)}=T_{\hat u}X\times\{0\}. 
$$
Recalling that ${\widetilde V}_0=(V_0,1)$, we obtain the required result.  

\smallskip
{\it Step~4: Continuous exact controllability at a time $\tau\in(0,1)$\/}. 
Given an interval $J_\tau=[0,\tau]$ and a function $\zzeta\in L^2(J_\tau,\R^n)$, we denote by~$\RR_\tau(\zzeta)$ the value at time~$\tau$ of the solution of~\eqref{0.3} issued from~$\hat u$.  We claim that there is $\tau\in(0,1)$, a closed ball $B'\subset X$, and a continuous function $g:B'\to L^2(J_\tau,\R^n)$ such that 
\begin{equation} \label{2.12}
\RR_\tau(g(v))=v\quad\mbox{for any $v\in B'$}. 
\end{equation}
To prove this, consider the extended system~\eqref{2.4} and, for $\zzeta\in\R^n$ and  $y_0\in\widetilde X$, denote by $e^{t\widetilde V_\zzeta}y_0$ its solution issued from~$y_0$ and corresponding to the control functions~$(\zeta^1,\dots,\zeta^n)\equiv\zzeta$. Suppose we have found vectors $\zzeta_0,\dots,\zzeta_d\in\R^n$ and an open parallelepiped 
$$
\widetilde\Pi=\{\aalpha=(\alpha_0,\dots,\alpha_d)\in\R^{d+1}:
a_l<\alpha_l<b_l\mbox{ for }0\le l\le d\}\subset [0,1]^{d+1}
$$
such that $\sum_lb_l<1$, and the mapping 
\begin{equation} \label{5.1}
\widetilde F:\widetilde\Pi\to\widetilde X, \quad 
\aalpha\mapsto e^{\alpha_d\widetilde V_{\zzeta_d}}\circ\cdots\circ
e^{\alpha_0\widetilde V_{\zzeta_0}}(\hat u,0)
\end{equation}
is an embedding of~$\widetilde\Pi$ into~$\widetilde X$. For any $\aalpha\in\widetilde\Pi$, we set $T_\aalpha=\alpha_0+\cdots+\alpha_d$ and define $\zzeta^\aalpha:[0,T_\aalpha]\to\R^n$ by the relation
$$
\zzeta^\aalpha(t)=\zzeta_l\quad
\mbox{for $\alpha_0+\cdots+\alpha_{l-1}\le t<\alpha_0+\cdots+\alpha_{l}$}, 
$$
where $l=0,\dots,d$, and the left-hand bound in the inequality is taken to be zero for $l=0$. Then, denoting by~$F$ to projection of~$\widetilde F$ to~$X$, we see that
\begin{equation} \label{5.2}
F(\aalpha)=\RR_{T_\aalpha}(\zzeta^\aalpha)\quad\mbox{for $\aalpha\in\widetilde\Pi$}. 
\end{equation}
We now fix $\hat\aalpha\in\widetilde\Pi$ and denote by~$\Pi$ the intersection of~$\widetilde\Pi$ with the $d$-dimensional hyperplane $L_{\hat\aalpha}=\{\alpha_0+\cdots+\alpha_{d}=\tau\}\subset\R^{d+1}$, where $\tau =T_{\hat \aalpha}$. Then~$\Pi$ is an open polyhedron in~$L_{\hat\aalpha}$. Since the last component of~$V_\zzeta$ is equal to~$1$ for any $\zzeta\in\R^n$, the last component of~$\widetilde F(\aalpha)$ is equal to~$\tau$ for any $\aalpha\in\Pi$, so that $\widetilde F(\widetilde\Pi)$ lies in the set $\{(u,z)\in\widetilde X:z=\tau\}$. Combining this fact with~\eqref{5.2}, we see that $\RR_\tau:\Pi\to X$ is a diffeomorphism of~$\Pi$ onto its image. Denote by~$\RR_\tau^{-1}$ its inverse. Now let $B'\subset \RR_\tau(\Pi)$ be an arbitrary closed ball. Then the mapping 
$$
g:B'\to L^2(J_\tau,\R^n), \quad g(v)=\zzeta^{\RR_\tau^{-1}(v)},
$$
is continuous and satisfies the required relation~\eqref{2.12}. 

Thus, it remains to find a parallelepiped $\widetilde\Pi$ such that~$\widetilde F$ defined by~\eqref{5.1} is an embedding. Even though this is a well-known result,  for the reader's convenience, we outline the main idea, following the argument in the proof of Krener's theorem (e.g., see Theorem~8.1 in~\cite{AS2004}).

It  was proved in Step~3 that $\Lie(\widetilde \VV)$ has full rank at the point $\hat y=(\hat u,0)$. By continuity, there is an open set $U\subset X$ containing~$\hat y$ such that $\Lie(\widetilde \VV)$ has full rank at any $y\in U$. In the construction below, we assume, without mentioning it explicitly, that all the points belong to~$U$. We shall construct vectors $\zzeta_j\in\R^n$, $0\le j\le d$, and numbers $0<a_j<b_j<1$ such that $\sum_jb_j<1$, and the following properties hold: 
\begin{itemize}
\item[(i)] 
The mapping $F_j:(\alpha_0,\dots,\alpha_j)\mapsto 
e^{\alpha_j\widetilde V_{\zzeta_j}}\circ\cdots\circ
e^{\alpha_0\widetilde V_{\zzeta_0}}(\hat u,0)$ defines an embedding of the open parallelepiped 
$$
\Pi_j
=\{(\alpha_0,\dots,\alpha_j)\in\R^{j+1}:
a_l<\alpha_l<b_l\mbox{ for }0\le l\le j\}\subset[0,1]^{j+1}
$$
into the manifold~$\widetilde X$; we denote by~$Y_j$ the image of~$\Pi_j$ under~$F_j$. 
\item[(ii)] 
The vector field $\widetilde V_{\zzeta_{j}}(y)$ is transversal to~$Y_{j-1}$ at any point $y\in Y_{j-1}$. 
\end{itemize}
Once this is established, one can take $\widetilde\Pi=\Pi_d$, completing thus the construction of~$g$. To prove the above properties, we proceed by recurrence. For $j=0$, we take any $\zzeta_0\in\R^n$ such that~$\widetilde V_{\zzeta_0}(\hat y)\ne0$. We then set $a_0=0$ and choose $b_0\in(0,1)$ so small that $F_0(\alpha)$ is an embedding of~$\Pi_0$. Property~(ii) is trivial for $j=0$. 

Let us assume that the vectors $\zzeta_l\in\R^n$ and the intervals $(a_l,b_l)$ have been  constructed for $0\le l\le j-1$. Since $\Lie(\widetilde \VV)$ has full rank at any point $y\in Y_{j-1}$, we can find $\zzeta_j\in\R^n$ and $y_j\in Y_{j-1}$ such that $\widetilde V_{\zzeta_j}(y_j)$ is transversal to~$Y_{j-1}$. By continuity, reducing the size of the intervals $(a_l,b_l)$ if necessary, we can assume that $\widetilde V_{\zzeta_j}(y)$ is transversal to~$Y_{j-1}$ at any point $y\in Y_{j-1}$. We now set $a_j=0$ and choose $b_j>0$ so small that $b_0+\cdots+b_j<1$ and~$F_j(\alpha_0,\dots,\alpha_j)$ defines an embedding of~$\Pi_j$ into~$\widetilde X$. We have thus established the required property.

\smallskip
{\it Step~5: Completion of the proof\/}. 
We can now easily prove the validity of inequality~\eqref{2.9}, in which $B\subset X$ is a closed ball. To this end, we define $\psi:X\to X$ as the mapping that takes $w_0\in X$ to~$w(1-\tau)$, where~$w(t)$ the solution of the equation $\dot w=V_0(w)$ issued from~$w_0$. It is well known from the theory of ordinary differential equations that~$\psi$ is a diffeomorphism of~$X$. Given any $v\in B'$, we extend~$g(v)\in L^2(J_\tau,\R^n)$ to the interval~$(\tau,1]$  by zero and note that, in view of~\eqref{2.12}, we have 
$$
S(\hat u,g(v))=\RR_1(g(v))=(\psi\circ\RR_\tau)(g(v))=\psi(v)
\quad\mbox{for $v\in B'$}.
$$ 
Defining $f:\psi(B')\to L^2(J_\tau,\R^n)$ by the relation $f(v)=g(\psi^{-1}(v))$, we see that
$$
S(\hat u, f(v))=v\quad\mbox{for $v\in\psi(B')$}.
$$
It remains to note that since~$\psi$ is a diffeomorphism, the set~$\psi(B')$ contains a non-degenerate closed ball $B\subset X$, and hence~\eqref{2.9} holds. This completes the proof of Theorem~\ref{t2.1}. 

\section{Appendix}
\label{s4}
\subsection{Sufficient condition for mixing}
\label{s4.1}
Let $X$ be a compact metric space and let $(u_k,\IP_u)$ be a discrete-time Markov process in~$X$. Since~$X$ is compact, $(u_k,\IP_u)$ has at least one stationary measure~$\mu\in\PP(X)$. The following result gives a sufficient condition for the uniqueness of stationary measure and its exponential stability.

\begin{theorem} \label{t4.1}
Suppose there is a point $\hat u\in X$ and a number~$\delta>0$ such that the following conditions are satisfied.

\noindent
{\bf Recurrence:} 
There is $p>0$ and an integer $m\ge1$ such that
\begin{equation} \label{4.1}
P_m\bigl(u,B_X(\hat u,\delta)\bigr)\ge p\quad\mbox{for any $u\in X$}.
\end{equation}

\noindent
{\bf Coupling:}
There is $\e>0$ such that
\begin{equation} \label{4.2}
\|P_1(u,\cdot)-P_1(u',\cdot)\|_{\mathrm{var}}\le 1-\e\quad
\mbox{for any $u,u'\in B_X(\hat u,\delta)$}. 
\end{equation}
Then $(u_k,\IP_u)$ has a unique stationary measure $\mu\in\PP(X)$, which is exponentially mixing for the total variation metric in the sense that~\eqref{4.3} holds for some positive numbers~$\gamma$ and~$C$.
\end{theorem}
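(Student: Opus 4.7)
The plan is to distill the two hypotheses into a single Doeblin-type contraction at time $N=m+1$, and then iterate. With $q=p^2\e$, the key intermediate estimate I would target is
$$
\|P_N(u,\cdot)-P_N(u',\cdot)\|_{\mathrm{var}}\le 1-q\qquad\text{for all }u,u'\in X,
$$
after which uniqueness and exponential mixing will follow by a standard contraction argument on~$\PP(X)$.

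To prove the displayed bound I would construct, for every pair $u,u'\in X$, a coupling of the chains issued from $u$ and $u'$ that has a fixed positive probability of coinciding at time~$N$. The construction chains the two hypotheses: first run the two chains \emph{independently} for $m$ steps, so that by \eqref{4.1} the event $E=\{u_m\in B_X(\hat u,\delta)\}\cap\{u_m'\in B_X(\hat u,\delta)\}$ has probability at least $p^2$; then, conditionally on $E$ and on $(u_m,u_m')$, splice in a maximal coupling of the one-step transitions $P_1(u_m,\cdot)$ and $P_1(u_m',\cdot)$, which by \eqref{4.2} satisfies $\IP\{u_N=u_N'\mid u_m,u_m'\}\ge\e$; off $E$ the chains are continued independently. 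This yields $\IP\{u_N=u_N'\}\ge p^2\e=q$, and the displayed bound follows from the coupling characterisation $\|\nu_1-\nu_2\|_{\mathrm{var}}=\inf\IP\{X\ne Y\}$.

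Next I would promote this pointwise bound to a contraction on $(\PP(X),\|\cdot\|_{\mathrm{var}})$. Given $\lambda_1,\lambda_2\in\PP(X)$, the Hahn decomposition writes the signed measure $\lambda_1-\lambda_2$ as $\alpha(\mu_1-\mu_2)$ with $\alpha=\|\lambda_1-\lambda_2\|_{\mathrm{var}}$ and mutually singular $\mu_1,\mu_2\in\PP(X)$. Using $\mu_i(X)=1$ and Fubini, one can rewrite
$$
(\PPPP_N^*\lambda_1-\PPPP_N^*\lambda_2)(\Gamma)
=\alpha\iint\bigl(P_N(u,\Gamma)-P_N(u',\Gamma)\bigr)\mu_1(\dd u)\,\mu_2(\dd u')
$$
for every $\Gamma\in\BB(X)$, whence the pointwise estimate gives $\|\PPPP_N^*\lambda_1-\PPPP_N^*\lambda_2\|_{\mathrm{var}}\le(1-q)\|\lambda_1-\lambda_2\|_{\mathrm{var}}$. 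Iterating, $\{\PPPP_{jN}^*\lambda\}_{j\ge 0}$ is Cauchy in the complete metric $\|\cdot\|_{\mathrm{var}}$; its limit is independent of $\lambda$ (apply the contraction to two different initial laws) and therefore equals the unique stationary measure~$\mu$. The bound~\eqref{4.3} then follows by interpolating between integer multiples of~$N$, using that $\PPPP_1^*$ is a weak contraction for the total variation norm.

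The main technical point, rather than a genuine obstacle, is the measurable splicing of the independent pre-coupling with the maximal coupling on the event~$E$; this is standard once one selects a jointly measurable version of the maximal coupling kernel on the compact metric space~$X$.
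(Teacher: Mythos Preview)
Your proposal is correct and follows essentially the same route as the paper: both establish that $\PPPP_{m+1}^*$ contracts the total variation metric with factor $1-p^2\e$, via the same decomposition of $\lambda_1-\lambda_2$ (your Hahn decomposition is the paper's $(1-d)\nu+d\hat\lambda$) and the same product-measure estimate $(\mu\otimes\mu')\bigl(B_X(\hat u,\delta)\times B_X(\hat u,\delta)\bigr)\ge p^2$ combined with~\eqref{4.2}. The only cosmetic difference is that you phrase the pointwise bound $\|P_{m+1}(u,\cdot)-P_{m+1}(u',\cdot)\|_{\mathrm{var}}\le 1-p^2\e$ as an explicit coupling construction, whereas the paper carries out the equivalent computation directly with total variation norms---which incidentally sidesteps the measurable-splicing technicality you flag at the end.
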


Even though this theorem is a particular case of more general results established in~\cite[Chapters~15 and~16]{MT1993} (see also~\cite{HM-bsm2011}), we give a direct proof of it for the reader's convenience. 

\begin{proof}
We shall prove that the mapping $\PPPP_{m+1}^*:\PP(X)\to\PP(X)$ is a contraction. This will imply all required results. 

\medskip
{\it Step 1\/}. 
Let us recall that, given two measures $\lambda,\lambda'\in\PP(X)$, we can find $\nu,\hat\lambda,\hat\lambda'\in\PP(X)$ such that (e.\,g., see Corollary~1.2.25 in~\cite{KS-book})
\begin{equation} \label{4.03}
\lambda=(1-d)\nu+d\hat\lambda,\quad \lambda'=(1-d)\nu+d\hat\lambda',
\end{equation}
where $d=\|\lambda-\lambda'\|_{\rm{var}}$. It follows that 
$$
\bigl\|\PPPP_{m+1}^*\lambda-\PPPP_{m+1}^*\lambda'\bigr\|_{\rm{var}}
=d\,\bigl\|\PPPP_{m+1}^*\hat\lambda-\PPPP_{m+1}^*\hat\lambda'\bigr\|_{\rm{var}}
=d\,\bigl\|\PPPP_1^*\mu-\PPPP_1^*\mu'\bigr\|_{\rm{var}},
$$
where we set $\mu=\PPPP_m^*\hat\lambda$ and $\mu'=\PPPP_m^*\hat\lambda'$. We see that the required contraction will be proved if we show that 
\begin{equation} \label{4.04}
\|\PPPP_1^*\mu-\PPPP_1^*\mu'\|_{\rm{var}}\le q<1. 
\end{equation}

{\it Step 2\/}. 
To prove~\eqref{4.04}, we first note that 
$$
\PPPP_1^*\mu-\PPPP_1^*\mu'
=\int_{X\times X}\bigl(P_1(u,\cdot)-P_1(u',\cdot)\bigr)\mu(\dd u)\mu'(\dd u'). 
$$
Taking the total variation norm and using~\eqref{4.2}, we derive  
\begin{align*}
\|\PPPP_1^*\mu-\PPPP_1^*\mu'\|_{\rm{var}}
&\le \int_{X\times X}\bigl\|P_1(u,\cdot)-P_1(u',\cdot)\bigr\|_{\rm{var}}\, 
\mu(\dd u)\mu'(\dd u')\\
&\le (\mu\otimes\mu')(G_\delta^c)+(1-\e)(\mu\otimes\mu')(G_\delta)\\
&=1-\e(\mu\otimes\mu')(G_\delta),
\end{align*}
where we set $G_\delta=B_X(\hat u,\delta)\times B_X(\hat u,\delta)$ and $G^c=(X\times X)\setminus G$. It remains to note that, by~\eqref{4.1}, we have 
$$
(\mu\otimes\mu')(G_\delta)\ge \mu(B_X(\hat u,\delta)) \mu'(B_X(\hat u,\delta))\ge p^2, 
$$
and therefore~\eqref{4.04} holds with $q=1-\e p^2$ for any $\lambda,\lambda'\in\PP(X)$. This completes the proof of Theorem~\ref{t4.1}. 
\end{proof}

\subsection{Image of measures under regular mappings}
\label{s4.2}
Let~$E$ be a separable Banach space, let~$X$ be a compact metric space, and let~$Y$ be a Riemannian manifold. We consider a continuous mapping $f:X\times E\to Y$ and recall that the concept of a decomposable measure is defined in Section~\ref{s1.1}. The following proposition is a particular case of  more general results established in Chapter~9 of~\cite{bogachev2010}.

\begin{proposition} \label{p4.3}
Let us assume that the mapping $f(u,\cdot):E\to Y$ is Fr\'echet differentiable for any fixed $u\in X$, the derivative $(D_\eta f)(u,\eta)$ is continuous on~$X\times E$, the image of the linear operator $(D_\eta f)(u_0,\eta_0)$ has full rank for some $(u_0,\eta_0)\in X\times E$, and~$\ell$ is a decomposable measure on~$E$ such that ${\mathsf P}_{n*}\ell$ possesses a positive continuous density with respect to the Lebesgue measure on~$F_n$. Then there is a ball $Q\subset X$ centred at~$u_0$ and a non-negative continuous function $\psi(u,y)$ defined on~$Q\times Y$ such that 
\begin{gather}
\psi(u_0,y_0)>0,\label{4.13}\\
f(u,\cdot)_*\ell\ge \psi(u,y)\vol(\dd y)\quad\mbox{for $u\in Q$},\label{4.14}
\end{gather}
where $y_0=f(u_0,\eta_0)$, and $\vol(\cdot)$ is the Riemannian measure on~$Y$. 
\end{proposition}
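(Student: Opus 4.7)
The plan is to exploit the decomposability of $\ell$ to reduce the density estimate to a finite-dimensional submersion argument, and then invoke the implicit function theorem to produce a continuous density near $y_0$. First, using the density of $\cup_n F_n$ in $E$ and the continuity of the bounded linear operator $(D_\eta f)(u_0,\eta_0):E\to T_{y_0}Y$, I would choose $n$ large enough that its restriction to $F_n$ remains surjective. Concretely, pick vectors $\xi_1,\dots,\xi_d\in E$ whose images form a basis of $T_{y_0}Y$; since surjectivity onto a finite-dimensional target is an open condition and $\mathsf P_n\xi_i\to\xi_i$, the images $(D_\eta f)(u_0,\eta_0)\mathsf P_n\xi_i$ still form a basis of $T_{y_0}Y$ for all $n$ large.

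Fix such an $n$ and set $p_0=\mathsf P_n\eta_0\in F_n$, $q_0=\mathsf Q_n\eta_0\in G_n$. Using $\ell=\mathsf P_{n*}\ell\otimes\mathsf Q_{n*}\ell$ on $E\cong F_n\oplus G_n$, one writes for any Borel $B\subset Y$
\begin{equation*}
\bigl(f(u,\cdot)_*\ell\bigr)(B)
=\int_{G_n}\biggl(\,\int_{F_n}I_B\bigl(f(u,p+q)\bigr)\rho_n(p)\,\dd p\biggr)\mathsf Q_{n*}\ell(\dd q).
\end{equation*}
For $(u,q)$ near $(u_0,q_0)$, the inner map $h_{u,q}(p):=f(u,p+q)$ from $F_n$ to $Y$ has surjective derivative at $p_0$ by the choice of $n$. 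Picking a $d$-dimensional affine slice $A$ through $p_0$ on which $(Dh_{u_0,q_0})(p_0)|_A$ is a linear isomorphism onto $T_{y_0}Y$, the implicit function theorem makes $h_{u,q}|_A$ a diffeomorphism from a neighbourhood of $p_0$ in $A$ onto a neighbourhood of $y_0$ in $Y$, uniformly for $(u,q)$ in some neighbourhood of $(u_0,q_0)$.

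Foliating $F_n$ by translates of $A$ parallel to $\ker(Dh_{u_0,q_0})(p_0)$ and applying Fubini together with the local change-of-variables formula on each leaf, one obtains that the inner pushforward $(h_{u,q})_*(\rho_n\,\dd p)$ admits a density $\psi_{u,q}(y)$ with respect to $\vol(\dd y)$ in a neighbourhood of $y_0$. Joint continuity of $f$, $D_\eta f$, and $\rho_n$ makes $\psi_{u,q}(y)$ continuous in $(u,q,y)$, and $\psi_{u_0,q_0}(y_0)>0$ because $\rho_n(p_0)>0$. Consequently there exist a ball $Q\subset X$ centred at $u_0$, a neighbourhood $V$ of $q_0$ in $G_n$, a neighbourhood $W$ of $y_0$ in $Y$, and a constant $c>0$ such that $\psi_{u,q}(y)\ge c$ for all $(u,q,y)\in Q\times V\times W$. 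Integrating out $q$ then gives $f(u,\cdot)_*\ell\ge c\,\mathsf Q_{n*}\ell(V)\,I_W(y)\vol(\dd y)$, and multiplying $I_W$ by a continuous cutoff supported near $y_0$ (and extending by zero elsewhere) yields the desired $\psi$.

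The main obstacle is ensuring $\mathsf Q_{n*}\ell(V)>0$, i.e.\ that $q_0\in\supp\mathsf Q_{n*}\ell$. Under the standing hypotheses (positivity of each $\rho_n$ and density of $\cup_n F_n$), one can argue exactly as in Step~1 of the proof of Theorem~\ref{t1.1} that $\supp\ell=E$, which forces $\supp\mathsf Q_{n*}\ell=G_n$ and thus covers any choice of $q_0$. A secondary technical point is the joint continuity of $\psi_{u,q}(y)$, but this rests on the smoothness of the diffeomorphism from the implicit function theorem together with the continuity of $f$, $D_\eta f$, and $\rho_n$, and presents no substantive difficulty.
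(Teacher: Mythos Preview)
The paper does not actually prove Proposition~\ref{p4.3}; it merely records that the result is a special case of results in Chapter~9 of~\cite{bogachev2010}, and that a direct proof for the case when~$Y$ is a vector space appears as Theorem~2.4 in~\cite{shirikyan-jfa2007}, with the manifold case being a straightforward extension. So there is no in-paper argument to compare against.

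Your proposal is correct and is in fact the standard direct argument: disintegrate~$\ell$ along the splitting $E=F_n\oplus G_n$, choose~$n$ large enough that the finite-dimensional slice map $p\mapsto f(u,p+q)$ is a submersion near~$p_0$, apply the local change-of-variables (co-area) formula to produce a continuous density for the inner pushforward, and then integrate over~$q$. The two technical points you flag are handled correctly: that $q_0\in\supp\mathsf Q_{n*}\ell$ follows from $\supp\ell=E$ (proved exactly as in Step~1 of Theorem~\ref{t1.1}), and the joint continuity of the density follows from the implicit function theorem with parameters plus dominated convergence. One small cosmetic remark: rather than foliating by kernel-translates of~$A$ and integrating leaf by leaf, it is cleaner simply to restrict to a single slice~$A$ through~$p_0$ (discarding the kernel directions altogether); this already yields a lower bound $(h_{u,q})_*(\rho_n\,\dd p)\ge \tilde\psi_{u,q}(y)\vol(\dd y)$ via the one change of variables on~$A$, and avoids having to control the diffeomorphism uniformly over a family of leaves. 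Either way the conclusion is the same, and your outline matches what the cited reference does.
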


A simple direct proof of Proposition~\ref{p4.3} can be found in~\cite{shirikyan-jfa2007} in the case when~$Y$ is a finite-dimensional vector space (see Theorem~2.4). Extension to the case of a Riemannian manifold is straightforward. 

\addcontentsline{toc}{section}{References}
\def\cprime{$'$} \def\cprime{$'$}
  \def\polhk#1{\setbox0=\hbox{#1}{\ooalign{\hidewidth
  \lower1.5ex\hbox{`}\hidewidth\crcr\unhbox0}}}
  \def\polhk#1{\setbox0=\hbox{#1}{\ooalign{\hidewidth
  \lower1.5ex\hbox{`}\hidewidth\crcr\unhbox0}}}
  \def\polhk#1{\setbox0=\hbox{#1}{\ooalign{\hidewidth
  \lower1.5ex\hbox{`}\hidewidth\crcr\unhbox0}}} \def\cprime{$'$}
  \def\polhk#1{\setbox0=\hbox{#1}{\ooalign{\hidewidth
  \lower1.5ex\hbox{`}\hidewidth\crcr\unhbox0}}} \def\cprime{$'$}
  \def\cprime{$'$} \def\cprime{$'$} \def\cprime{$'$}
\providecommand{\bysame}{\leavevmode\hbox to3em{\hrulefill}\thinspace}
\providecommand{\MR}{\relax\ifhmode\unskip\space\fi MR }
% \MRhref is called by the amsart/book/proc definition of \MR.
\providecommand{\MRhref}[2]{%
  \href{http://www.ams.org/mathscinet-getitem?mr=#1}{#2}
}
\providecommand{\href}[2]{#2}

\end{document}